\documentclass[lettersize,journal]{IEEEtran}
\usepackage{amsmath,amsfonts}
\usepackage{amsthm}
\usepackage{algorithm}
\usepackage{array}
\usepackage[caption=false,font=normalsize,labelfont=sf,textfont=sf]{subfig}
\usepackage{textcomp}
\usepackage{stfloats}
\usepackage{url}
\usepackage{verbatim}
\usepackage{graphicx}
\usepackage{mathtools}
\usepackage{cite}
\usepackage{booktabs}
\usepackage{multirow}
\usepackage{algpseudocode}
\usepackage{color}
\newtheorem{theorem}{Theorem}
\newtheorem{proposition}[theorem]{Proposition}

\newtheorem{definition}[theorem]{Definition}
\newtheorem{assumption}[theorem]{Assumption}

\newcommand{\E}{\mathbb{E}}
\newcommand{\Prob}{\mathbb{P}}

\begin{document}

\title{Strike Price Optimization for ISO New England's
Day-Ahead Ancillary Services}

\author{
Karl Zhu,
Parviz Alivand,
Jinye Zhao,
Tongxin Zheng,
and Dimitris Bertsimas
\thanks{Karl Zhu and Dimitris Bertsimas are with the Operations Research
Center, Massachusetts Institute of Technology, Cambridge, MA, USA.
Dimitris Bertsimas is also with the MIT Sloan School of Management.}
\thanks{Parviz Alivand, Jinye Zhao, and Tongxin Zheng are with ISO New
England, Holyoke, MA, USA. The views expressed in this paper are those of the authors and do not
represent the views of ISO New England.}
}

\markboth{Submitted to IEEE Transactions on Energy Markets, Policy and Regulation}%
{Zhu \MakeLowercase{\textit{et al.}}: Strike Price Optimization for
ISO New England's Day-Ahead Ancillary Services}

\maketitle

\begin{abstract}
ISO New England's (ISO-NE) Day-Ahead Ancillary Services Initiative settles
reserve products as financial call options on real-time energy prices.
The system-wide strike price creates an efficiency-reliability
tradeoff: {increasing it lowers competitive reserve offers, but weakens
resources' incentives to incur preparation costs and remain available
for real-time performance}. The existing strike price rule does not
explicitly account for heterogeneous resource incentives and reserve
requirements. We develop an optimization framework that selects the
highest strike price while ensuring that enough resources retain an
incentive to prepare and collectively satisfy the reserve requirements.
Because a resource's preparation decision may affect the resulting
real-time price distribution, its incentive depends on an unobservable
counterfactual. To address this, we derive a tight lower-bound
certificate using only the available conditional price distribution
and a bound on the resource's price impact. We show that each resource
enters the optimization through a single incentive threshold and that
any finite optimal strike price occurs at one of these thresholds.
This yields a tractable exact solution method based on threshold calculations and a small number of linear feasibility checks. Using reconstructed ISO-NE conditional price
distributions and representative gas-fired resources, we find that
higher heat-rate combustion turbines are more likely than
combined-cycle resources to constrain the strike price choice.
\end{abstract}

\section{Introduction}
\label{sec:introduction}
Operating reserves are essential for maintaining power-system
reliability. Before the implementation of the Day-Ahead Ancillary
Services Initiative (DASI), ISO New England (ISO-NE) relied on a combination of energy-only day-ahead market and out-of-market reliability unit commitment process to procure sufficient capacities to meet the next day's load forecast and operational reserve needs. Because the out-of-market reliability unit commitment process was not fully integrated into the day-ahead energy market, they limited transparent price formation
and the joint procurement of energy and reserve capability.

To address these shortcomings, ISO-NE implemented DASI on March 1, 2025 to integrate ancillary-service
procurement directly into the day-ahead market. Under DASI, energy is
co-optimized with four ancillary-service products:
Ten-Minute Spinning Reserve (TMSR), Ten-Minute Non-Spinning Reserve
(TMNSR), Thirty-Minute Operating Reserve (TMOR), and Energy Imbalance
Reserve (EIR). These products are settled as financial call options on
real-time energy prices. 

For each operating hour \(t\), ISO-NE announces a system-wide strike
price \(K_t\) before the day-ahead market clears. The same \(K_t\)
applies to all four ancillary-service products. A resource awarded one
MWh of reserve receives an upfront option premium and incurs the
real-time \textit{close-out} charge
\[
(P_t-K_t)^+,
\]
where \((x)^+:=\max\{x,0\}\) and \(P_t\) is the realized real-time energy price used for all four products. If the resource produces when \(P_t>K_t\), its
real-time energy revenue offsets the close-out charge, leaving a net
energy payment of \(K_t\) per MWh. Importantly, awarded resources are not physically obligated to produce energy. If they choose not to perform, they simply pay the
close-out charge and incur no additional penalties. As a result, the
strike price becomes the primary mechanism through which DASI incentivizes real-time performance \cite{iso_new_england_inc_revisions_2023}.

More fundamentally, the strike price governs a tradeoff between market efficiency and system reliability. A higher strike price reduces both the expected close-out exposure borne by reserve suppliers and the risk associated with that exposure, thereby lowering competitive reserve offers. However, the same reduction in close-out exposure weakens the financial benefit of preparing to produce in real time. Determining the strike price requires balancing these competing objectives.

Since DASI's implementation, ISO-NE has determined the strike price
using the rule
\[
K_t^{\mathrm{ISO}}
=
\E[P_t]+10,
\]
where the expectation is computed from a conditional Gaussian mixture
model (GMM) of the real-time price \cite{iso_new_england_inc_revisions_2023}. The rule is motivated by the
efficiency-reliability tradeoff. In a single-resource setting, setting
the strike price equal to the resource's short-run marginal cost gives
the highest strike price that preserves the maximum incentive to
prepare for real-time performance. Because no single marginal cost
represents a heterogeneous reserve fleet, the expected real-time price
provides a practical system-level reference, while the additional
\$10/MWh further reduces option exposure and competitive reserve
offers.

This construction is economically intuitive and operationally simple,
but it does not solve the underlying system-design problem. In
particular, \(\E[P_t]\) is not a direct estimate of the marginal costs
of the resources whose availability is required by the system. The
rule also does not explicitly account for resource-specific preparation
costs, reserve capabilities, product eligibility, or ISO-NE's nested
reserve structure. It is therefore best viewed as a practical
rule motivated by the underlying tradeoff, rather than as the
solution to a formal system-level optimization problem.

The importance of this design choice became apparent following the first year of market implementation. Prior to market launch, ISO-NE projected annual cost increases of approximately \$140 million, while the Internal Market Monitor subsequently reported incremental costs of \$974 million during the first 12 months of operation \cite{iso_new_england_internal_market_monitor_assessment_2026}. {Although these costs reflect multiple aspects of market design and market outcomes, the strike price emerged as an important parameter for reconsideration. ISO-NE subsequently proposed modifying the strike price rule by introducing a minimum floor constraint \cite{ewing_day-ahead_2026}. This development motivates a more explicit quantitative framework for relating the strike price to market efficiency and resource incentives for real-time availability.}

Building on the tradeoff underlying the current rule, this paper develops
an optimization framework for strike price design in ISO-NE's DASI
market. We formulate strike price selection as maximizing the
system-wide strike price while ensuring that sufficient resources retain
an incentive to prepare and collectively satisfy ISO-NE's reserve
requirements. The framework accommodates heterogeneous resource costs,
reserve capabilities, and product eligibility while preserving the
economic intuition of the current market design. Because some
resource-level parameters are not publicly observable, we view the
optimization model as a benchmark for the strike-price design problem,
with practical implementations approximating these quantities using
publicly or commercially available information.

Reliability options and related option-based market designs were
originally developed for long-run resource adequacy
\cite{vazquez_market_2002,oren_generation_2005,
bidwell_reliability_2005,cramton_forward_2008}.
Subsequent work examines their valuation, investment effects, and
regulatory implementation
\cite{andreis_pricing_2019,fontini_investing_2021,
mastropietro_reliability_2024,fabra_primer_2018}.
The strike price is a central design choice in this literature:
existing approaches link it to benchmark generation costs and fuel
indices
\cite{bidwell_reliability_2005,mastropietro_reliability_2024},
evaluate its effects on option value and investment
\cite{andreis_pricing_2019,fontini_investing_2021},
optimize the strike price within a market-equilibrium
model \cite{feng_generation_2024}, or prescribe it as a high quantile
of the electricity-price distribution \cite{roy_capoptix_2025}.
In the ISO-NE DASI setting, \cite{ent_risk-based_2026} studies how advance fuel procurement responds to a given strike price
for the EIR product. However, to our knowledge, no prior work has
explicitly optimized the DASI strike price.

Our contributions are as follows.
\begin{enumerate}

\item \textbf{Incentive characterization.}
We characterize a resource's incentive to incur an avoidable
preparation cost and remain available for real-time production.
Because preparation may affect the resulting real-time price
distribution, the incentive depends on an unobservable counterfactual.
We derive a tight lower-bound certificate that depends only on the available conditional price distribution and a bound on the resource's price
impact.

\item \textbf{Strike price optimization.}
We formulate system-wide strike price selection as maximizing the
strike price while requiring sufficient certified resources to satisfy
ISO-NE's nested reserve requirements. The model accounts for
heterogeneous preparation costs, marginal costs, reserve capabilities,
and product eligibility.

\item \textbf{Tractable threshold solution.}
We prove that each resource enters the strike price problem through a
single incentive threshold and that any finite optimum occurs at one
of these thresholds. This yields a tractable exact solution method
based on independent scalar threshold calculations and a finite number
of linear feasibility checks. We then quantify the resulting threshold
regimes for representative natural gas resources.
\end{enumerate}

The remainder of the paper is organized as follows.
Section~\ref{sec:tradeoff} formalizes the efficiency-reliability
tradeoff.
Section~\ref{sec:incentive} derives the incentive certificate and
characterizes its threshold structure.
Section~\ref{sec:optimization} formulates the strike price optimization model and develops an exact solution method.
Section~\ref{sec:empirical_thresholds} estimates these thresholds for
representative gas-fired resources and examines their sensitivity to
the modeling assumptions.
Section~\ref{sec:conclusion} concludes.

\section{Strike Price Tradeoff}\label{sec:tradeoff}
The strike price affects both reserve offers and incentives for
real-time availability. A higher strike price reduces reserve
suppliers' expected close-out exposure and therefore lowers competitive
offers. A lower strike price strengthens the incentive for awarded
resources to incur preparation costs and remain available when the
option is exercised. This section formalizes these opposing effects. 

{Throughout the analysis, let \(F_t\) denote the conditional
distribution of the real-time price \(P_t\), given the information
available when the strike price is determined. We treat \(F_t\) as
fixed with respect to the strike price \(K\); that is, the price
distribution is not conditioned on \(K\) itself. This is consistent
with ISO-NE's current methodology, which first estimates \(F_t\) using
a Gaussian mixture model and then computes the strike price from that
distribution
\cite{iso_new_england_market_development_statistical_2023}.}
\subsection{Efficiency: Higher Strike Prices Decrease Offer Prices}
\label{subsec:offer}
In a competitive market, a resource's offer reflects
the incremental cost of providing the reserve product. ISO-NE assumes the competitive offer price of the reserves can be decomposed into the following three components \cite{iso_new_england_market_development_competitive_2023}.

\begin{assumption}
\label{ass:offer_decomposition}
A competitive resource \(i\) in hour \(t\) submits an offer of the
form
\begin{equation*}
    \mathrm{Offer}_{i,t}(K)
    =
    \mathrm{AIC}_{i,t}
    +
    \E\left[(P_t-K)^+\right]
    +
    \rho_{i,t}(K),
\end{equation*}
where \(\mathrm{AIC}_{i,t}\) is the net incremental avoidable input cost of supporting the reserve obligation and is independent of \(K\), and \(\rho_{i,t}(K)\) is a risk premium that is decreasing in \(K\).
\end{assumption}

The avoidable input cost \(\mathrm{AIC}_{i,t}\) is the expected net
incremental cost of actions induced by a reserve award, measured relative
to the resource's next-best alternative. It reflects the cost of those
actions net of the expected economic value they create. If receiving the
reserve award does not change the resource's operating or procurement
decisions, \(\mathrm{AIC}_{i,t}\) may be zero. Otherwise, actions such as
advance fuel procurement or energy storage may give rise to a positive
\(\mathrm{AIC}_{i,t}\). In all cases, \(\mathrm{AIC}_{i,t}\) is
independent of $K$.

The risk premium \(\rho_{i,t}(K)\) being decreasing in \(K\) is natural. It compensates the resource for uncertainty associated with the option settlement. It may depend on
the probability that the option is exercised, \(\Prob(P_t>K)\), or
on the magnitude and tail risk of the close-out cost
\((P_t-K)^+\), for example through VaR or CVaR. Each of these
exposures is decreasing in \(K\). Thus, any risk premium that is
increasing in the resource's option exposure is decreasing in
$K$.

It follows that higher strike prices
decrease offer prices.

\begin{proposition}
\label{prop:offer_monotone}
Under Assumption~\ref{ass:offer_decomposition},
\(\mathrm{Offer}_{i,t}(K)\) is decreasing in \(K\).
\end{proposition}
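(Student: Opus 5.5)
The plan is to show that each of the three summands in the decomposition of Assumption~\ref{ass:offer_decomposition} is (weakly) decreasing in \(K\). A sum of weakly decreasing functions is weakly decreasing, and it is strictly decreasing whenever at least one summand is strictly decreasing. Throughout, ``decreasing'' will be read in the same sense the assumption uses for \(\rho_{i,t}\).

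\textbf{The avoidable input cost.} By Assumption~\ref{ass:offer_decomposition}, \(\mathrm{AIC}_{i,t}\) does not depend on \(K\). It is therefore constant and trivially weakly decreasing.

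\textbf{The expected close-out term.} Recall that \(F_t\) is fixed with respect to \(K\). For each realization \(p\) of \(P_t\), the map \(K\mapsto (p-K)^+\) is nonincreasing, since \(x\mapsto x^+\) is nondecreasing and \(p-K\) is decreasing in \(K\). So for \(K_1<K_2\) we have \((P_t-K_2)^+\le (P_t-K_1)^+\) pointwise. Monotonicity of expectation then gives \(\E[(P_t-K_2)^+]\le \E[(P_t-K_1)^+]\). I will assume \(\E[P_t^+]<\infty\), which the GMM model guarantees, so that these expectations are finite.

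To get strictness as well, I would use the identity
\[
\E[(P_t-K_1)^+]-\E[(P_t-K_2)^+]=\int_{K_1}^{K_2}\Prob(P_t>s)\,ds .
\]
This difference is strictly positive whenever \(\Prob(P_t>K_1)>0\), which holds for all \(K\) under a Gaussian mixture \(F_t\).

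\textbf{The risk premium.} The assumption states directly that \(\rho_{i,t}(K)\) is decreasing in \(K\). The surrounding discussion also justifies this: exposures such as \(\Prob(P_t>K)\), and VaR or CVaR of \((P_t-K)^+\), are pointwise or stochastically nonincreasing in \(K\) by the same argument as for the close-out term.

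\textbf{Conclusion and main obstacle.} Adding the three terms completes the proof. No step is genuinely hard. The only care needed is with the second term, where I must justify finiteness of the expectation and keep the weak and strict senses of monotonicity straight. I would state the result as weakly decreasing in general, and strictly decreasing whenever \(\Prob(P_t>K)>0\), for instance under the GMM price model.
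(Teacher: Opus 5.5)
Your proposal is correct and follows the same route as the paper's proof: \((P_t-K)^+\) is pointwise nonincreasing in \(K\), expectation preserves this, \(\mathrm{AIC}_{i,t}\) does not depend on \(K\), and \(\rho_{i,t}\) is decreasing by assumption. Your additions, the finiteness remark and the strictness argument via \(\int_{K_1}^{K_2}\Prob(P_t>s)\,ds\) together with right-continuity of the survival function, are sound refinements that the paper does not state.
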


\begin{proof}
For \(K_2 \ge K_1\),
\(
(P_t-K_2)^+ \le (P_t-K_1)^+
\)
for every realization of \(P_t\). Taking expectations preserves the
monotonicity. Since \(\mathrm{AIC}_{i,t}\) is independent of \(K\) and
\(\rho_{i,t}(K)\) is decreasing in \(K\), the result follows.
\end{proof}

The reduction in offer prices has two components. First, the expected
close-out payment decreases with the strike price. {Since the realized closeout charges} are collected and redistributed by the ISO, they are largely transfers
rather than social costs. Second, the resource's risk premium decreases
as its exposure to the option settlement falls. Higher strike prices
also reduce the frequency and magnitude of close-out settlements that
must be collected and redistributed. The efficiency gains therefore
come from lower risk premiums and administrative settlement costs.

\subsection{Reliability: Lower Strike Prices Increase Incentives}

The efficiency argument pushes the strike price upward. However,
reserve products are intended not only to reduce procurement costs,
but also to incentivize awarded resources to remain available for
real-time performance. As the strike price increases, the option is
exercised less frequently and the expected close-out exposure
declines, weakening the incentive to incur the preparation costs
required for availability. The central reliability question is
therefore whether the financial benefits of remaining available
outweigh these preparation costs.

We therefore characterize when an awarded resource has a financial
incentive to prepare for real-time availability. Consider a resource
\(i\) that has received a reserve award for hour \(t\). Let
\(c_{i,t}\) denote its short-run marginal cost of real-time
production. After receiving the award, the resource faces a binary
decision: it may incur an avoidable preparation cost \(A_{i,t}\)
and become available for real-time production, or avoid this cost
and remain unavailable. Unlike \(\mathrm{AIC}_{i,t}\), which is a net incremental cost used in offer formation, \(A_{i,t}\) is the underlying cost of preparation before accounting for the expected economic value created by being available. It may include fuel-procurement losses, startup costs, or other readiness expenditures. We assume that \(A_{i,t}\) does not depend on subsequent energy production and is sunk once the resource prepares.

A resource that becomes available acquires the option to supply energy in real time. If this additional supply is
deployed, it may reduce the resulting real-time price. We refer to
this effect as \emph{price impact}. To account for it, we
distinguish between two price distributions. Let \(P_t\) denote the baseline random real-time price with cumulative distribution function
\(F_t\), and let \(P^w_{i,t}\) denote the impacted real-time price that would occur if resource $i$ unilaterally prepared and became available, with
cumulative distribution function \(F^w_{i,t}\).

The expected payoff from remaining unavailable is
\begin{equation}
\pi^{wo}_{i,t}(K)
=
-
\E\left[(P_t-K)^+\right],
\label{eq:pi_wo}
\end{equation}
while the expected payoff from becoming available is
\begin{equation}
\pi^{w}_{i,t}(K)
=
\E\left[(P^{w}_{i,t}-c_{i,t})^+\right]
-
\E\left[(P^{w}_{i,t}-K)^+\right]
-
A_{i,t},
\label{eq:pi_w}
\end{equation}
where \(w\) and \(wo\) denote the cases with and without
preparation, respectively.

The resource prefers to prepare whenever the expected payoff from
becoming available exceeds that from remaining unavailable. The
incentive value of availability is therefore defined as follows.
\begin{definition}[Incentive value and incentive-compatibility]
\label{def:incentive}

The incentive value for resource $i$ in hour $t$ to be available at
strike price $K$ is
\begin{equation}
I_{i,t}(K)
:=
\pi^{w}_{i,t}(K)
-
\pi^{wo}_{i,t}(K).
\label{eq:incentive_def}
\end{equation}
Resource $i$ is \textit{incentive-compatible} in hour $t$ at $K$ if
\[
I_{i,t}(K)\ge0.
\]
\end{definition}
Substituting \eqref{eq:pi_wo} and \eqref{eq:pi_w} into \eqref{eq:incentive_def}:
\begin{align}
   I_{i,t}(K)
&=
\E[(P^w_{i,t}-c_{i,t})^+]
-
\E[(P^w_{i,t}-K)^+]
\notag\\
&\quad+
\E[(P_t-K)^+]
-
A_{i,t}.
\label{eq:incentive-explicit}
\end{align}
\section{Incentive Certificate Characterization}
\label{sec:incentive}
The central difficulty in estimating a resource's preparation incentive
is that its price impact is unobservable. In particular, the
counterfactual distribution \(F^w_{i,t}\) that would arise if the
resource became available cannot be observed directly from market data.
We address this problem by imposing economically interpretable bounds
on price impact and deriving a conservative incentive certificate that
depends only on the observed conditional price distribution. The
certificate is valid for every counterfactual distribution consistent
with those bounds. Moreover, it is tight, so no uniformly stronger
guarantee is possible without additional assumptions.

Throughout this section, we work with an arbitrary resource \(i\) and
hour \(t\) and suppress their indices when no ambiguity arises. All
distributions, expectations, and probabilities are understood to be
conditional on the information available when the strike price is
determined. Let \(P\sim F\) denote the observed real-time price and let
\(P_w\sim F_w\) denote the counterfactual price when the resource
prepares and becomes available. Define their survival functions by
\[
\bar F(p)=1-F(p),
\qquad
\bar F_w(p)=1-F_w(p).
\]

{
Below the resource's marginal cost, the resource would not be economically dispatched even if it were available. We therefore impose the following restriction.}
\begin{assumption}[No price impact below marginal cost]
\label{ass:no_impact}
Availability does not affect the price distribution below the
resource's marginal cost:
\[
F_w(p)=F(p),
\qquad p<c.
\]
\end{assumption}
{
Above the resource's marginal cost, additional availability may affect real-time prices because the resource can be economically dispatched. Rather than specifying a structural model of how the resource changes dispatch and price formation, we impose a reduced-form bound directly on the counterfactual price distribution.}
\begin{assumption}[Bounded price impact]
\label{ass:bounded_impact}
There exists a constant \(\phi\in[0,1]\) such that
\[
(1-\phi)\bar F(p)
\le
\bar F_w(p)
\le
\bar F(p),
\qquad p\ge c.
\]
\end{assumption}
{
Assumption~\ref{ass:bounded_impact} requires additional availability to weakly reduce the upper tail of the real-time price distribution while limiting the magnitude of this reduction. The upper bound \(\bar F_w(p)\le\bar F(p)\) requires availability not to increase the probability of prices exceeding any level \(p\ge c\). The lower bound limits the price impact: at each such price level, availability can reduce the exceedance probability by at most a fraction \(\phi\) of its baseline value. Thus, \(\phi=0\) corresponds to no price impact, while larger values of \(\phi\) permit progressively greater reductions in the upper tail. When restoring resource indices, we write this bound as \(\phi_i\), allowing price impact to vary across resources while assuming it is constant across hours. Importantly, the assumption does not specify how availability changes dispatch or individual price realizations; it only bounds the resulting change in the conditional price distribution.
}
Together, the two assumptions allow the unobservable counterfactual distribution to be eliminated from the incentive calculation. The following theorem gives a tight lower bound on the incentive value that holds for every counterfactual distribution satisfying these restrictions.
\begin{theorem}[Conservative incentive certificate]
\label{thm:conservative_incentive}
Under Assumptions~\ref{ass:no_impact} and
\ref{ass:bounded_impact}, the incentive value satisfies
\[
I(K)\ge \underline I(K),
\]
where
\begin{equation}
\underline I(K)
=
\begin{cases}
\E[(P-c)^+]-A,
& K\le c,\\[6pt]
\begin{aligned}[t]
&(1-\phi)\E[(P-c)^+]\\
&\quad+
\phi\E[(P-K)^+]-A,
\end{aligned}
& K>c.
\end{cases}
\label{eq:conservative_incentive}
\end{equation}
Moreover, the bound is tight: no uniformly stronger lower bound is possible.
\end{theorem}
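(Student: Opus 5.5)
The plan is to rewrite every expectation of a call payoff as an integral of a survival function, using $\E[(X-a)^+]=\int_a^\infty \bar F_X(p)\,dp$. The incentive value \eqref{eq:incentive-explicit} then becomes a linear functional of $\bar F_w$. Minimizing it over all $\bar F_w$ allowed by Assumptions~\ref{ass:no_impact} and~\ref{ass:bounded_impact} is a pointwise problem, because the constraints are pointwise in $p$.

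\textbf{Case $K\le c$.} Write
\[
\E[(P_w-c)^+]-\E[(P_w-K)^+]=-\int_K^c \bar F_w(p)\,dp .
\]
On $[K,c)$ Assumption~\ref{ass:no_impact} gives $\bar F_w=\bar F$, so this term equals $-\int_K^c\bar F(p)\,dp$. Adding $\E[(P-K)^+]=\int_K^\infty \bar F$ leaves
\[
I(K)=\int_c^\infty \bar F(p)\,dp-A=\E[(P-c)^+]-A .
\]
This is an identity, so the bound holds with equality for every admissible $F_w$ and is trivially tight.

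\textbf{Case $K>c$.} Write
\[
\E[(P_w-c)^+]-\E[(P_w-K)^+]=\int_c^K \bar F_w(p)\,dp .
\]
On $[c,K]$ the lower bound of Assumption~\ref{ass:bounded_impact} gives $\bar F_w\ge(1-\phi)\bar F$. Hence
\[
I(K)\ge (1-\phi)\int_c^K\bar F+\int_K^\infty \bar F - A .
\]
Next, split
\[
\int_K^\infty\bar F=(1-\phi)\int_K^\infty\bar F+\phi\int_K^\infty \bar F .
\]
The first piece combines with $(1-\phi)\int_c^K\bar F$ to give $(1-\phi)\E[(P-c)^+]$, and the second is $\phi\E[(P-K)^+]$. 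This yields exactly \eqref{eq:conservative_incentive}. Note that the values of $\bar F_w$ above $K$ drop out entirely, so only the constraint on $[c,K]$ matters.

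For tightness I would exhibit an admissible $F_w$ attaining the bound, namely
\[
\bar F_w(p)=
\begin{cases}
\bar F(p), & p<c,\\
(1-\phi)\bar F(p), & p\ge c .
\end{cases}
\]
This is nonincreasing, right-continuous, has a limit of $0$ at infinity, and takes values in $[0,1]$, so it is a valid survival function. The downward jump at $c$ corresponds to moving probability mass from above $c$ onto $c$ itself, which Assumption~\ref{ass:no_impact} permits since it only constrains $p<c$. It satisfies both assumptions, and plugging it into the identity above gives equality. The lower bound is therefore attained for every $(F,\phi,c,A,K)$, so no uniformly larger lower bound exists. The same $F_w$ works for all $K$ at once, which strengthens the tightness claim.

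The main obstacle is technical. One must justify the survival-function integral representation, which requires finite first moments $\E[P^+]<\infty$ (I would assume this implicitly). One must also check that the extremal $\bar F_w$ is a legitimate distribution. The atom at $c$ is the only subtle point: it should be verified that Assumption~\ref{ass:no_impact} is stated with strict inequality $p<c$ so that the jump is allowed. The remaining steps are routine algebra.
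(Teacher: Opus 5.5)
Your proposal is correct and follows the paper's proof essentially step for step. Both use the tail-integral representation, exact cancellation on $[K,c)$ when $K\le c$, and the pointwise bound $\bar F_w\ge(1-\phi)\bar F$ on $[c,K]$ when $K>c$. Tightness uses the same extremal distribution $\bar F_w=(1-\phi)\bar F$ on $[c,\infty)$, which is the paper's $F_w=F+\phi\bar F$. Your remarks on finite first moments and the atom at $c$ are valid technical points that the paper leaves implicit.
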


\begin{proof}
First consider \(K\le c\). Applying the tail-integral identity
\[
\E[(Y-a)^+]
=
\int_a^\infty \Prob(Y>p)\,dp
\]
to \eqref{eq:incentive-explicit} gives
\begin{align*}
I(K)
&=
\int_c^\infty \bar F_w(p)\,dp
-
\int_K^\infty \bar F_w(p)\,dp
+
\int_K^\infty \bar F(p)\,dp
-
A\\
&=
-\int_K^c \bar F_w(p)\,dp
+
\int_K^c \bar F(p)\,dp
+
\int_c^\infty \bar F(p)\,dp
-
A.
\end{align*}
By Assumption~\ref{ass:no_impact},
\(\bar F_w(p)=\bar F(p)\) for \(p<c\). The first two
integrals therefore cancel, yielding
\[
I(K)
=
\int_c^\infty \bar F(p)\,dp-A
=
\E[(P-c)^+]-A.
\]
Thus, the certificate is exact for \(K\le c\).

Now consider \(K>c\). The same identity gives
\begin{align*}
I(K)
&=
\int_c^\infty \bar F_w(p)\,dp
-
\int_K^\infty \bar F_w(p)\,dp
+
\E[(P-K)^+]
-
A\\
&=
\int_c^K \bar F_w(p)\,dp
+
\E[(P-K)^+]
-
A.
\end{align*}
By Assumption~\ref{ass:bounded_impact},
\(
\bar F_w(p)\ge(1-\phi)\bar F(p)
\) for $p \ge c$. Consequently,
\begin{align*}
I(K)
&\ge
(1-\phi)\int_c^K \bar F(p)\,dp
+
\E[(P-K)^+]
-
A\\
&=
(1-\phi)\E[(P-c)^+]
-
(1-\phi)\E[(P-K)^+]
\notag\\
&\quad+
\E[(P-K)^+]
-
A\\
&=
(1-\phi)\E[(P-c)^+]
+
\phi\E[(P-K)^+]
-
A.
\end{align*}

It remains to establish tightness. For \(K\le c\), equality holds for
every pair \((F,F_w)\) satisfying
Assumption~\ref{ass:no_impact}. For \(K>c\), consider the
counterfactual distribution
\[
F_w(p)
=
\begin{cases}
F(p),
& p<c,\\
F(p)+\phi\bar F(p),
& p\ge c.
\end{cases}
\]
This distribution satisfies Assumptions~\ref{ass:no_impact} and
\ref{ass:bounded_impact}, and it has
\[
\bar F_w(p)=(1-\phi)\bar F(p),
\qquad p\ge c.
\]
Hence every inequality in the preceding derivation holds with
equality, proving that the bound is tight.
\end{proof}

Theorem~\ref{thm:conservative_incentive} shows that incentive
compatibility depends only on the upper tail of the conditional
real-time price distribution. For \(K>c\), the conservative
certificate depends on the tail expectations
\(\E[(P-c)^+]\) and \(\E[(P-K)^+]\), together with the
preparation cost \(A\). Thus, strike price design is fundamentally
an upper-tail problem: the availability incentives created by
reserve products are determined by scarcity events rather than
typical price outcomes.
\begin{proposition}[Threshold structure of the conservative certificate]
\label{prop:shape}

For any resource \(i\) and hour \(t\), suppose
\(c_{i,t}\ge0\), \(\phi_{i}\in[0,1]\), and the density \(f_t\) is
strictly positive above \(c_{i,t}\). Then
\(\underline I_{i,t}(K)\) is constant for
\(K\le c_{i,t}\) and weakly decreasing for
\(K>c_{i,t}\). Furthermore, exactly one of the following holds:

\begin{enumerate}
    \item
    \[
    \underline I_{i,t}(K)<0
    \qquad \forall\,K\ge0.
    \]
    The certificate does not establish incentive compatibility at
    any strike price.

    \item
    \[
    \underline I_{i,t}(K)\ge0
    \qquad \forall\,K\ge0.
    \]
    The resource is incentive-compatible at every strike price.

    \item There exists a unique threshold
    \(\kappa_{i,t}\ge c_{i,t}\) satisfying
    \[
    \underline I_{i,t}(\kappa_{i,t})=0.
    \]
    The resource is incentive-compatible for
    \(K\le\kappa_{i,t}\). This case can occur only if \(\phi_i>0\).
\end{enumerate}
\end{proposition}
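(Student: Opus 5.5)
We work directly from the closed form \eqref{eq:conservative_incentive} of Theorem~\ref{thm:conservative_incentive}. Write $g(K):=\E[(P-K)^+]=\int_K^\infty \bar F(p)\,dp$. For $K\le c$, $\underline I(K)=g(c)-A$ does not depend on $K$, so it is constant. For $K>c$, $\underline I(K)=(1-\phi)g(c)+\phi g(K)-A$, and $g$ is weakly decreasing (Proposition~\ref{prop:offer_monotone}'s pointwise argument). Hence $\underline I$ is weakly decreasing on $(c,\infty)$ because $\phi\ge0$. The two pieces agree at $K=c$, so $\underline I$ is continuous. Together with monotonicity on each piece, this makes $\underline I$ weakly decreasing on all of $[0,\infty)$, using $c\ge0$ so that the constant piece is nonempty or at least reached at $K=c$.

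Next, we identify the limit. As $K\to\infty$, $g(K)\to0$, assuming $\E[P^+]<\infty$ as is implicit in the definition of $I$. So $\underline I(K)\downarrow L:=(1-\phi)g(c)-A$, while the maximal value is $M:=\underline I(0)=g(c)-A$. The trichotomy then splits on the signs of $M$ and $L$:
\begin{itemize}
\item If $M<0$, monotonicity gives $\underline I(K)<0$ for all $K\ge0$, which is case 1.
\item If $L\ge0$, the infimum is nonnegative, so $\underline I(K)\ge0$ for all $K$, which is case 2.
\item Otherwise $M\ge0>L$. By continuity and the intermediate value theorem there is a root $\kappa\ge c$, because $\underline I\equiv M\ge0$ on $[0,c]$ and the value drops below zero eventually.
\end{itemize}
The three cases are mutually exclusive because $M\ge L$. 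In case 3 we also have $M-L=\phi g(c)>0$, which forces $\phi>0$; indeed, if $\phi=0$ then $\underline I$ is constant and only cases 1 or 2 can occur.

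The main obstacle is uniqueness of $\kappa$, which needs \emph{strict} decrease of $\underline I$ on $(c,\infty)$ whenever $\phi>0$. The plan is to use the density hypothesis: $g'(K)=-\bar F(K)$, and $\bar F(K)=\int_K^\infty f>0$ for every $K\ge c$ because $f$ is strictly positive above $c$. Thus $g$ is strictly decreasing on $[c,\infty)$, and so is $\underline I$ when $\phi>0$.

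One subtlety remains when $M=0$. Then $\underline I(c)=0$, and every $K\le c$ is also a root, so uniqueness must be read as uniqueness among $K\ge c$. Alternatively, $\kappa$ is the largest root, equivalently $\kappa=\sup\{K:\underline I(K)\ge0\}$. With that reading, strict decrease beyond $c$ gives the unique $\kappa\ge c$. Incentive compatibility for $K\le\kappa$ then follows from monotonicity, and $\underline I<0$ for $K>\kappa$ follows from strictness.
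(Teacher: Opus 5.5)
Your proof is correct and follows the same route as the paper. Both use constancy below $c$, weak monotonicity above it from $\frac{d}{dK}\underline I(K)=-\phi\,\Prob(P>K)$ (your $g'=-\bar F$), strict decrease from the positive density when $\phi>0$, and continuity to get the trichotomy, with your explicit endpoint values $M=\E[(P-c)^+]-A$ and $L=(1-\phi)\E[(P-c)^+]-A$ making the paper's ``by continuity'' step precise and correctly flagging that when $M=0$ the uniqueness of $\kappa$ must be read on $[c,\infty)$.
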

\begin{proof}
Suppress the resource and time indices. Differentiating
\eqref{eq:conservative_incentive} with respect to \(K\) gives
\[
\frac{d}{dK}\underline I(K)
=
\begin{cases}
0,
& K<c,\\[6pt]
-\phi\Prob(P>K),
& K>c.
\end{cases}
\]

Thus, \(\underline I(K)\) is constant for \(K\le c\) and weakly
decreasing for \(K>c\). If \(\phi=0\), the certificate is constant
for all \(K\ge0\), so either case 1 or case 2 holds.

Now suppose \(\phi>0\). Since the density is strictly positive above
\(c\),
\[
\Prob(P>K)>0
\qquad
\text{for every finite }K>c,
\]
and hence \(\underline I(K)\) is strictly decreasing above \(c\).
By continuity, it must therefore remain negative for every \(K\),
remain nonnegative for every \(K\), or cross zero at a unique
\(\kappa\in[c,\infty)\). These correspond to cases 1--3,
respectively.

\end{proof}
Figure~\ref{fig:single_generator_incentive} illustrates the unique
threshold case in Proposition~\ref{prop:shape} for an example resource
and hour. The illustrative conditional real-time price distribution is
\(
P
\sim
0.5251\,\mathcal N(34.66,6.54^2)
+
0.4270\,\mathcal N(55.76,17.52^2)
+
0.0479\,\mathcal N(104.86,71.29^2).
\)
This distribution corresponds to the ISO-NE conditional GMM for July 1, 2025 at 12:00~p.m.; see \cite{iso_new_england_market_development_statistical_2023} for how the GMM is fitted. For an illustrative resource
with \(c=40\), \(A=10\), and \(\phi=0.2\), the upper panel shows the
conditional real-time price density, while the lower panel plots the
conservative certificate \(\underline I(K)\). The certificate is
constant for \(K\le c\) and strictly decreasing for \(K>c\), crossing
zero at \(\kappa=60.8\). Thus, the certificate establishes incentive
compatibility for \(K\le60.8\), while it is inconclusive for larger
strike prices. Changing the preparation cost \(A\) shifts the
certificate vertically, producing the always-negative and
always-nonnegative cases in Proposition~\ref{prop:shape}.
\begin{figure}[t]
    \centering
    \includegraphics[width=0.9\linewidth]
    {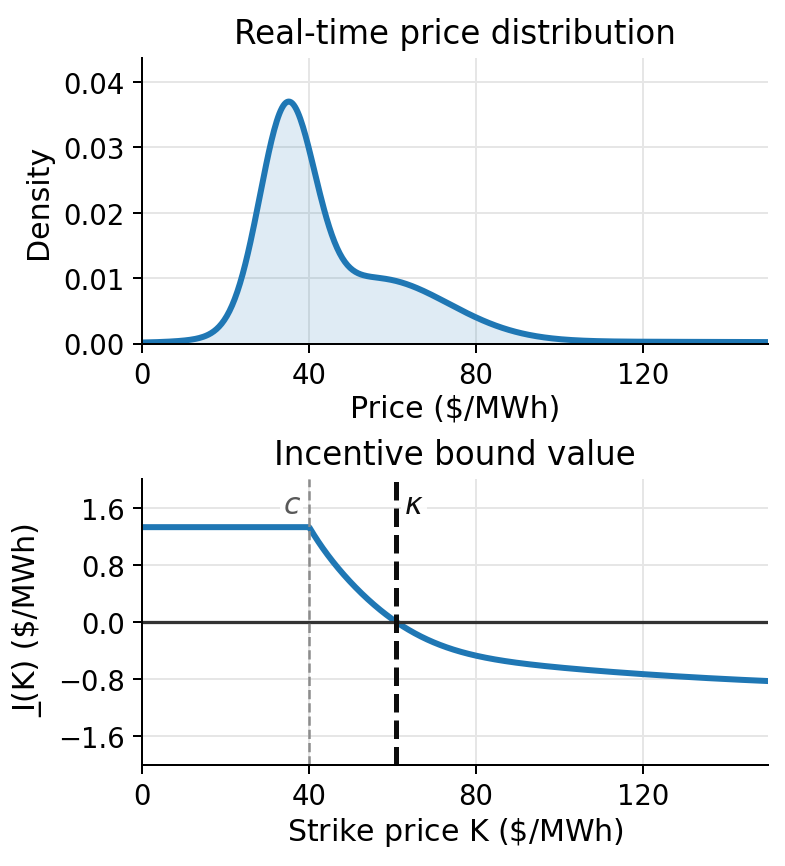}
\caption{Threshold structure of the conservative incentive certificate
for an illustrative resource \(i\) and hour \(t\). The conditional
real-time price distribution corresponds to July 1, 2025 at
12:00~p.m. With \(c=40\), \(A=10\), and \(\phi=0.2\), the certificate
is constant below \(c\), decreases above \(c\), and crosses zero at
\(\kappa=60.8\).}
    \label{fig:single_generator_incentive}
\end{figure}
\section{Strike Price Optimization}
\label{sec:optimization}
We formulate strike price selection as maximizing the strike price
while ensuring that enough resources are incentive-compatible to meet ISO-NE's reserve requirements.
Proposition~\ref{prop:offer_monotone} motivates the objective, while
the threshold structure developed in Section~\ref{sec:incentive}
yields an exact finite solution method.
\subsection{Model Formulation}
\label{subsec:model}
\begin{table}[t]
\caption{Sets, parameters, and decision variables.}
\label{tab:model}
\centering
\footnotesize
\renewcommand{\arraystretch}{1.1}
\setlength{\tabcolsep}{4pt}

\begin{tabular}{p{2.0cm} p{5.7cm}}
\toprule
\textbf{Symbol} & \textbf{Description} \\
\midrule

\multicolumn{2}{l}{\textbf{Sets}} \\
\midrule

$\mathcal{G}$ &
Reserve-eligible resources. \\

$\mathcal{Q}$ &
Reserve requirement categories. \\

$\mathcal{A}$ &
Reserve products. \\

$\mathcal{A}_q \subseteq \mathcal{A}$ &
Reserve products eligible to satisfy
requirement \(q\in\mathcal{Q}\). \\

$\mathcal{A}_i \subseteq \mathcal{A}$ &
Reserve products that resource
\(i\in\mathcal{G}\) is eligible to provide. \\

\midrule

\multicolumn{2}{l}{\textbf{Parameters}} \\
\midrule

$F_t, f_t$ &
Conditional distribution and density of the
real-time price \(P_t\) \\
$c_{i,t}$ &
Short-run marginal cost of real-time production
for resource \(i\). \\

$A_{i,t}$ &
Fixed preparation cost of resource \(i\). \\

$\phi_{i}$ &
Bound on the price impact of resource \(i\). \\

$\overline{R}_{i,a,t}$ &
Maximum capability (MW) of resource \(i\)
for reserve product \(a\). \\

$\overline{R}_{i,30,t}$ &
Maximum cumulative 30-minute reserve capability
(MW) of resource \(i\). \\

$\mathrm{Req}_{q,t}$ &
Reserve requirement (MW) for category
\(q\in\mathcal{Q}\). \\

\midrule

\multicolumn{2}{l}{\textbf{Decision Variables}} \\
\midrule

$K_t$ &
Strike price for hour \(t\). \\

$z_{i,t}$ &
Binary indicator equal to 1 if resource \(i\)
is counted as certified incentive-compatible. \\

$r_{i,a,t}$ &
Auxiliary allocation (MW) from resource \(i\)
to product \(a\), used to verify reserve coverage. \\

\bottomrule
\end{tabular}
\end{table}
Table~\ref{tab:model} defines the sets, parameters, and decision variables. ISO-NE's nested reserve structure is \(
\mathcal{A}_{\text{TMSR}}=\{\text{TMSR}\}, \
\mathcal{A}_{\mathrm{Total10}}=\{\text{TMSR, TMNSR}\}, \
\mathcal{A}_{\mathrm{Total30}}=\{\text{TMSR, TMNSR, TMOR}\}.
\)
For each hour \(t\), the model is
\begin{subequations}
\label{eq:main_model}
\begin{align}
\max_{K_t,\,\boldsymbol{r}_t,\,\boldsymbol{z}_t} \quad
    & K_t \label{eq:obj} \\
\text{s.t.} \quad
    & z_{i,t}=1
    \;\implies\;
    \underline I_{i,t}(K_t)\ge0,
    && \forall i\in\mathcal G, \label{eq:ic} \\
    & \boldsymbol r_t
    \in
    \mathcal R_t(\boldsymbol z_t),
    \label{eq:feas} \\
    & K_t\ge0, \label{eq:bounds} \\
    & z_{i,t}\in\{0,1\},
    && \forall i\in\mathcal G. \label{eq:binary}
\end{align}
\end{subequations}

Objective~\eqref{eq:obj} selects the highest strike price.
Constraint~\eqref{eq:ic} allows a resource to count toward reserve
coverage only if it is certified incentive-compatible.
Constraint~\eqref{eq:feas} requires the counted resources to satisfy
all reserve requirements, where
\begin{equation}
\label{eq:R_set}
\begin{alignedat}{2}
\mathcal R_t(\boldsymbol z_t)
=\{\boldsymbol r_t:~&
\sum_{i\in\mathcal G}
\sum_{a\in\mathcal A_q\cap\mathcal A_i}
r_{i,a,t}
\ge \mathrm{Req}_{q,t}, \quad \forall q\in\mathcal Q,\\
&0\le r_{i,a,t}
\le \overline R_{i,a,t}z_{i,t}, \quad \forall i\in\mathcal G,\ a\in\mathcal A_i,\\
&\sum_{a\in\mathcal A_{\mathrm{Total30}}\cap\mathcal A_i}
r_{i,a,t}
\le \overline R_{i,30,t}z_{i,t}, \quad \forall i\in\mathcal G\}.
\end{alignedat}
\end{equation}
\subsection{Solution Method}
\label{subsec:structure}

Although problem~\eqref{eq:main_model} is a
mixed-integer nonlinear optimization model, it exhibits a special optimal solution structure. Proposition~\ref{prop:shape} shows that each resource is
certified incentive-compatible up to an upper threshold
\(\kappa_{i,t}\). Hence, the set of resources that can be counted
toward reserve coverage changes only when \(K_t\) crosses one of
these thresholds. Therefore, Theorem~\ref{thm:endpoint} reduces the optimization to a finite search over the threshold values.
\begin{theorem}[Threshold optimality]
\label{thm:endpoint}
If problem~\eqref{eq:main_model} is feasible and bounded, then there
exists an optimal strike price \(K_t^*\) satisfying
\(
K_t^*=\kappa_{i,t}
\) for some \(i\in\mathcal G\) with \(\kappa_{i,t}<\infty\).
\end{theorem}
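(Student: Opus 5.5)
The argument rests on Proposition~\ref{prop:shape}, applied to each resource separately. For each $i\in\mathcal G$ I define the certified set $S_i=\{K\ge0:\underline I_{i,t}(K)\ge0\}$. By that proposition, $S_i$ takes one of three forms:
\begin{itemize}
\item $S_i=\emptyset$ (case 1);
\item $S_i=[0,\infty)$ (case 2), and I set $\kappa_{i,t}=\infty$;
\item $S_i=[0,\kappa_{i,t}]$ with $\kappa_{i,t}<\infty$ (case 3).
\end{itemize}
For case 3 I use monotonicity and continuity of $\underline I_{i,t}$ to see that the set is closed. In every case $S_i$ is a downward-closed interval containing its supremum whenever that supremum is finite. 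For a fixed $K$, I let $\mathcal G(K)=\{i: K\in S_i\}$ be the resources that may have $z_{i,t}=1$.

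The key step is to rewrite feasibility in terms of $K$ alone. Because $\mathcal R_t(\boldsymbol z_t)$ only grows when more $z_{i,t}$ are switched on, a strike price $K$ is feasible if and only if the choice $z_{i,t}=\mathbf 1[i\in\mathcal G(K)]$ gives a nonempty $\mathcal R_t(\boldsymbol z_t)$. Each $S_i$ is downward-closed, so $\mathcal G(K)$ is nonincreasing in $K$. Hence the feasible set $\mathcal K$ of strike prices is itself downward-closed in $[0,\infty)$. Its elements are the $K$ for which the fixed linear system with $\boldsymbol z=\mathbf 1_{\mathcal G(K)}$ is feasible.

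Next I bring in boundedness. Let $K^\dagger=\sup\mathcal K<\infty$ and let $\mathcal G_\infty=\{i:\kappa_{i,t}=\infty\}$. If the resources in $\mathcal G_\infty$ alone admitted a feasible $\boldsymbol r_t$, then every $K\ge0$ would be feasible, which contradicts boundedness. So $\mathcal G(K)\supsetneq\mathcal G_\infty$ for every feasible $K$, and at least one finite threshold exists.

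Let $\kappa^*$ be the largest finite threshold $\kappa_{i,t}$ with $\kappa_{i,t}\le K^\dagger$. I need to show that $\kappa^*$ is defined and equals $K^\dagger$, and that $K^\dagger$ is attained. This is the step I expect to be the main obstacle, since it needs a careful treatment of how $\mathcal G(K)$ behaves between thresholds. I would handle it as follows. The finite thresholds split $[0,\infty)$ into finitely many pieces, and $\mathcal G(K)$ is constant on each half-open interval $(\kappa_{(j)},\kappa_{(j+1)}]$, where the thresholds are sorted. The membership condition $K\le\kappa_{i,t}$ makes each such interval closed on the right. Suppose some $K\in\mathcal K$ satisfies $K>\kappa_{(m)}$, the largest finite threshold. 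Then $\mathcal G(K)=\mathcal G_\infty$, which is the contradiction above. So $\mathcal K\subseteq[0,\kappa_{(m)}]$. Now take the interval $(\kappa_{(j)},\kappa_{(j+1)}]$ containing any feasible $K$. Constancy of $\mathcal G$ on that interval shows that $\kappa_{(j+1)}$ is also feasible.

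Finally, I take the largest $j$ for which $\kappa_{(j)}$ is feasible. Then no $K>\kappa_{(j)}$ is feasible, so $K^*_t=\kappa_{(j)}$ is optimal. A feasible point exists by assumption, and it lies in some interval of the partition. If that point is $0$ and lies below every threshold, then the right endpoint $\kappa_{(1)}$ is still a finite threshold, so the argument still applies.
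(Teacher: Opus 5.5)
Your proof is correct and follows essentially the same route as the paper: test feasibility with the maximal certified vector, use that the certified set is constant on each interval between consecutive finite thresholds (closed on the right), push any feasible $K$ up to the next finite threshold, and use boundedness to exclude strike prices above the largest one. Your version is slightly more complete, because you explicitly show the supremum is attained by taking the largest feasible threshold, whereas the paper's contradiction argument starts from an optimal $K_t^*$ whose existence it takes for granted.
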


\begin{proof}
For a given strike price \(K_t\), define
\[
\widehat z_{i,t}(K_t)
=
\mathbf 1\!\left\{
\underline I_{i,t}(K_t)\ge0
\right\},
\qquad i\in\mathcal G.
\]
Thus, \(\widehat{\boldsymbol z}_t(K_t)\) includes every resource
certified incentive-compatible at \(K_t\). It is sufficient to test
reserve feasibility using this vector: if any subset of the certified
resources can satisfy the reserve requirements, then including all
certified resources preserves feasibility.

By Proposition~\ref{prop:shape},
\(\widehat{\boldsymbol z}_t(K_t)\) changes only when \(K_t\) crosses
a resource threshold. Suppose an optimal strike price \(K_t^*\) is not
a finite resource threshold. If there is a smallest finite threshold
\(\bar\kappa>K_t^*\), then the set of certified resources is unchanged
on \([K_t^*,\bar\kappa]\). Hence, \(\bar\kappa\) is also feasible,
contradicting the optimality of \(K_t^*\). If no such threshold exists,
the set of certified resources remains unchanged for every
\(K_t>K_t^*\), so the strike price can be increased without losing
feasibility, contradicting boundedness. Therefore, an optimal strike
price is attained at a finite resource threshold.
\end{proof}

The threshold structure also characterizes the infeasible and
unbounded cases. Since the set of certified resources can only shrink
as \(K_t\) increases, the problem is infeasible if the resources
certified at \(K_t=0\) cannot satisfy the reserve requirements.
Conversely, if the resources certified for every strike price can
satisfy the requirements, then the same allocation remains feasible
for arbitrarily large \(K_t\), and the problem is unbounded. In all
other feasible cases, the optimum is finite and
Theorem~\ref{thm:endpoint} applies.

Algorithm~\ref{alg:endpoint} summarizes the resulting exact solution
method. Each resource threshold is computed independently using a
one-dimensional root-finding method. The algorithm checks the
unbounded case and then evaluates the distinct finite thresholds in
decreasing order. It requires at most \(|\mathcal G|+1\) linear
reserve-feasibility checks, with all nonlinear computation confined to
the scalar threshold calculations.
\begin{algorithm}[t]
\caption{Threshold enumeration for hour \(t\)}
\label{alg:endpoint}
\begin{algorithmic}[1]
\For{each \(i\in\mathcal G\)}
    \If{\(\underline I_{i,t}(0)<0\)}
        \State Mark \(i\) as never certified.
    \ElsIf{\(\displaystyle
        \lim_{K\to\infty}\underline I_{i,t}(K)\ge0\)}
        \State Mark \(i\) as always certified; set
        \(\kappa_{i,t}=\infty\).
    \Else
        \State Compute \(\kappa_{i,t}\).
    \EndIf
\EndFor

\State Let \(\boldsymbol z_t^\infty\) indicate always-certified resources.
\If{\(\mathcal R_t(\boldsymbol z_t^\infty)\neq\emptyset\)}
    \State \Return unbounded
\EndIf

\For{each finite \(\kappa_{i,t}\) in decreasing order}
    \State Set \(K_t\gets\kappa_{i,t}\).
    \State Set
    \(\widehat z_{i,t}
    =\mathbf 1\{\underline I_{i,t}(K_t)\ge0\}\),
    \(i\in\mathcal G\).
    \If{\(\mathcal R_t(\widehat{\boldsymbol z}_t)\neq\emptyset\)}
        \State \Return \(K_t^*\gets K_t\)
    \EndIf
\EndFor

\State \Return infeasible
\end{algorithmic}
\end{algorithm}
\subsection{Illustrative Strike Price Calculation}
\label{subsec:example2}

We illustrate Algorithm~\ref{alg:endpoint} using three natural-gas
resources:
\[
\mathcal G
=
\{\mathrm{GasCC1},\mathrm{GasCC2},\mathrm{GasCT}\},
\]
consisting of two combined-cycle units and one combustion turbine.
We use the same conditional real-time price distribution as in
Figure~\ref{fig:single_generator_incentive}, corresponding to
July 1, 2025 at 12:00~p.m., together with the price-impact bound
\(\phi=0.2\). Their \((c_i,A_i)\) values are
\((35,5)\), \((37,7)\), and \((40,10)\), respectively.

For this single-hour aggregate example, we suppress the product and
time indices. Table~\ref{tab:algo_trace} reports each resource's reserve
capability \(\overline R_i\) and threshold \(\kappa_i\), obtained by
solving
\(
\underline I_i(\kappa_i)=0.
\) For simplicity, consider a single aggregate reserve requirement of
\(1000\) MW; formulation~\eqref{eq:R_set} accommodates the full nested
reserve structure. At \(K=145.4\), only Gas CC1 is certified, providing
600 MW and leaving the requirement unmet. At \(K=64.6\), Gas CC2 is
also certified, increasing the available capability to 1100 MW.
Therefore,
\(
K^*=\$64.6/\mathrm{MWh}.
\)
Consistent with Theorem~\ref{thm:endpoint}, the optimal strike price is
attained at a resource threshold.

We deliberately omit a system-wide empirical implementation because a
credible calculation would require a separate, detailed study of the
eligible fleet, including hourly marginal costs, avoidable preparation
costs, reserve capabilities, product eligibility, and resource-specific
price impacts. In particular, the preparation costs and price-impact
bounds require careful operational and market analysis. A fleet-wide
result based on simplified assumptions would not be informative. Instead, the next section focuses on the resource-level incentive thresholds that serve as inputs to the optimization.
\begin{table}[t]
\centering
\caption{Illustration of threshold enumeration with an aggregate
reserve requirement of \(1000\) MW.}
\label{tab:algo_trace}
\footnotesize
\begin{tabular}{lrr}
\toprule
Resource & Capability (MW) & Threshold (\$/MWh) \\
\midrule
Gas CC1 & 600 & 145.4 \\
Gas CC2 & 500 & 64.6 \\
Gas CT  & 400 & 46.5 \\
\midrule
\multicolumn{3}{l}{\emph{Threshold enumeration}}\\
\midrule
Candidate \(K\) & Certified capacity & Status \\
\midrule
145.4 & 600 MW  & Infeasible \\
\textbf{64.6} & \textbf{1100 MW} & \textbf{Optimal} \\
46.5  & 1500 MW & Feasible \\
\bottomrule
\end{tabular}
\end{table}

\section{Empirical Study of Incentive Thresholds}
\label{sec:empirical_thresholds}
We estimate hourly incentive thresholds \(\kappa_{i,t}\) for
representative gas-fired resources. Specifically, we consider a
combined-cycle resource (Gas CC) and a combustion turbine (Gas CT).
These resources are relevant because preparing for real-time
availability may require committing to fuel before the operating hour.
We report threshold statistics under a baseline specification and
examine their sensitivity to the cost and price-impact assumptions.

For the real-time price distributions, we reconstruct the first
12 months of ISO-NE's conditional Gaussian mixture models, covering
March 1, 2025 through February 28, 2026. The reconstruction uses
ISO-NE's public methodological memorandum \cite{iso_new_england_market_development_statistical_2023} together with the model features available internally.

For a gas-fired resource \(i\), let \(HR_i\) denote its heat rate in
MMBtu/MWh. We use representative values
\(HR_{\mathrm{GasCC}}=7\) and \(HR_{\mathrm{GasCT}}=11\), consistent
with the values used in the Internal Market Monitor revisions \cite{ewing_day-ahead_2026}. Let \(g_t^{DA}\) denote the
Algonquin Citygate day-ahead natural gas price in \$/MMBtu. This index
is also an input to ISO-NE's conditional GMM.

Preparing the resource requires committing to fuel before the operating
hour. If the resource does not produce, part of the committed fuel value
may be recovered through resale, rebalancing, or adjustment of its fuel
position. Let \(\alpha\in[0,1]\) denote the recoverable fraction of the
fuel value. We set
\[
c_{i,t}=\alpha HR_i g_t^{DA},
\qquad
A_{i,t}=(1-\alpha)HR_i g_t^{DA}.
\]
Thus, \(c_{i,t}\) represents the recoverable portion of the fuel value
and \(A_{i,t}\) the irreversible portion incurred upon preparation. The
decomposition satisfies
\(c_{i,t}+A_{i,t}=HR_i g_t^{DA}\), so the full fuel commitment is
allocated between the marginal production cost and the fixed preparation
cost without double counting.

The baseline specification uses \(\alpha=0.8\), corresponding to an
80\% recoverable fuel value and a 20\% irreversible commitment loss.
Because \(\alpha\) is a reduced-form assumption rather than an
observed resource-specific recovery rate, we also examine alternative
values in the sensitivity analysis.

By Proposition~\ref{prop:shape}, each resource \(i\) and hour \(t\)
falls into one of
three regimes. The certificate may be negative for every strike price
(never certified), remain nonnegative for every strike price (always
certified), or cross zero at a unique finite threshold satisfying
\(\underline I_{i,t}(\kappa_{i,t})=0\).
Table~\ref{tab:kappa_distribution} reports the threshold distribution
under the baseline specification
\((\alpha,\phi)=(0.8,0.2)\). For Gas CC, the certificate is negative
for every strike price in 3.9\% of hours and remains nonnegative for
every strike price in 95.1\%, leaving finite thresholds in 1.0\% of
hours. For Gas CT, the corresponding shares are 31.1\%, 60.6\%, and
8.3\%.
\begin{figure*}[t]
    \centering
    \includegraphics[width=\linewidth]
    {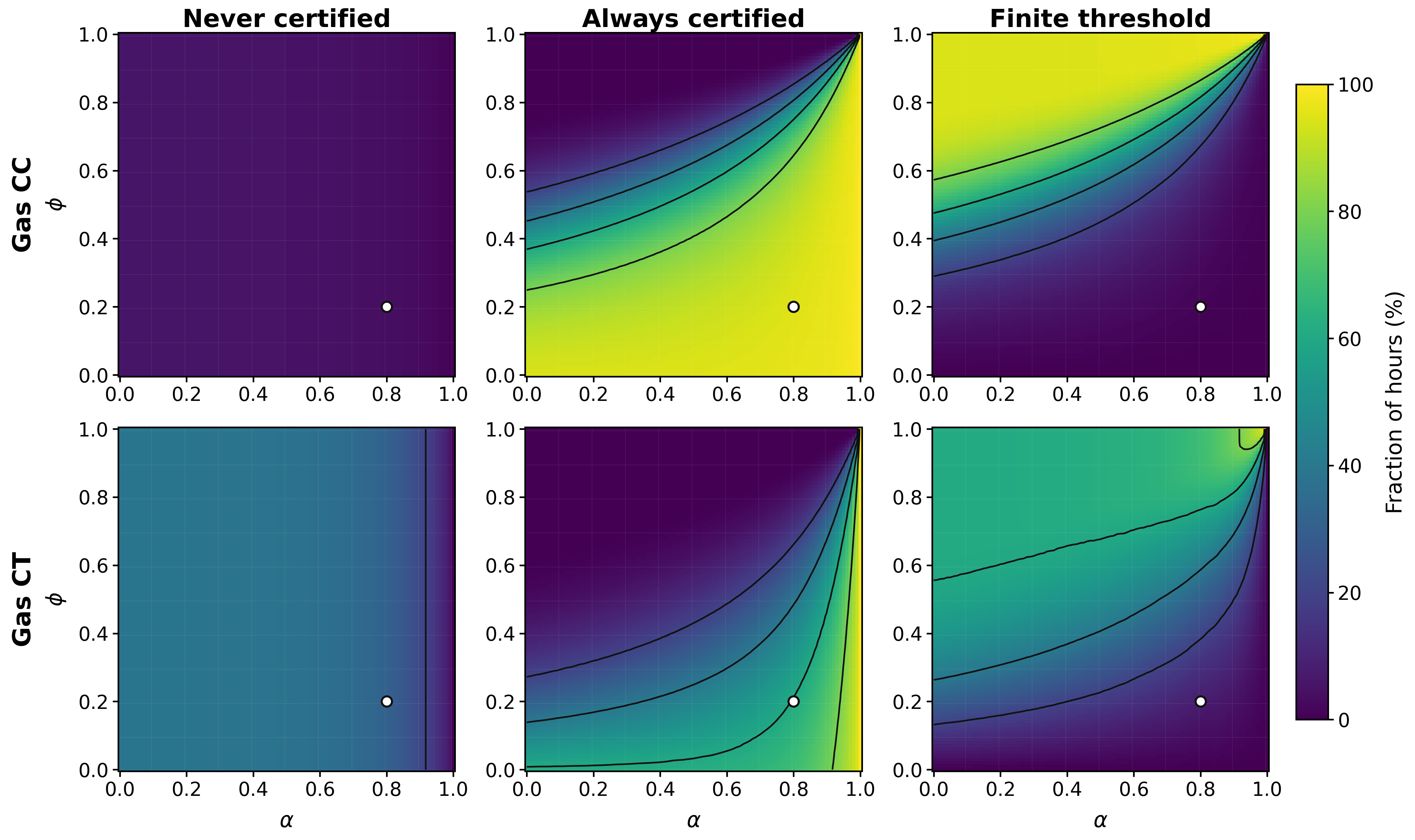}
    \caption{Fraction of hours in the never-certified, always-certified, and finite-threshold regimes for representative Gas CC and Gas CT
    resources under alternative salvage factors \(\alpha\) and
    price-impact bounds \(\phi\). Open circles indicate the baseline
    specification \((\alpha,\phi)=(0.8,0.2)\).}
    \label{fig:threshold_regimes}
\end{figure*}
Among hours with finite thresholds, the median threshold is
\$159.6/MWh for Gas CC and \$44.8/MWh for Gas CT. The lower Gas CT
thresholds and higher frequency of certification failure reflect its
higher heat rate and therefore larger fuel commitment for each MWh of
production.

These classifications identify when a resource is responsive to the
strike price. A resource in the never-certified regime cannot be counted toward reserve coverage under the conservative certificate, regardless of
the strike price. Only resources with
finite thresholds can directly constrain the strike price choice.

The always-certified regime should not, however, be interpreted as
implying that the system-wide problem is unbounded. Unboundedness
occurs only if the always-certified resources, taken together and
subject to their product eligibility and capability limits, can satisfy
every reserve requirement. If additional resources with finite thresholds are needed to satisfy
any reserve requirement, then those thresholds impose a finite upper
bound on the system-wide strike price.

Distinguishing these regimes is therefore useful for identifying which
resources and hours are likely to matter for strike price design.
Doing so in practice would require predicting the regime from the
hourly price distribution and resource characteristics. The
never-certified classification remains conservative: it indicates that
the incentive bound is negative --- the actual incentive is not necessarily negative.
\begin{table}[t]
\centering
\caption{Distribution of incentive thresholds under the baseline
specification. Monetary entries are in \$/MWh and are computed only
over finite thresholds.}
\label{tab:kappa_distribution}
\footnotesize
\begin{tabular}{lrrrrr}
\toprule
Resource & Median & P90 & P99 & Never certified &
\(\kappa=\infty\) \\
\midrule
Gas CC & 159.6 & 261.5 & 354.5 & 3.9\% & 95.1\% \\
Gas CT & 44.8  & 156.8 & 289.3 & 31.1\% & 60.6\% \\
\bottomrule
\end{tabular}
\end{table}

The threshold regime depends on both the salvage fraction \(\alpha\)
and the assumed price impact \(\phi\).
Figure~\ref{fig:threshold_regimes} reports the fraction of hours in
which each representative resource is never certified, has a finite
threshold, or remains certified for every strike price. Changing
\(\alpha\) reallocates the fuel commitment between the short-run
marginal cost \(c_{i,t}\) and the preparation cost \(A_{i,t}\), so its
net effect is determined by both components. Increasing \(\phi\)
weakens the conservative certificate, shifting hours from the
always-certified regime to the finite-threshold regime. The
never-certified share is unaffected by \(\phi\), because certification
already fails at \(K\le c_{i,t}\) in those hours.

The finite-threshold panels identify the parameter regions in which a
resource can impose a finite upper bound on the strike price. The
figure also shows that Gas CT is substantially more likely than Gas CC
to be never certified or to have a finite threshold. Under the assumed fuel-cost construction, the higher heat-rate Gas CT
is more likely to restrict the set of strike prices that can be
certified.

\section{Conclusion}
\label{sec:conclusion}

This paper develops an optimization framework for selecting the
system-wide strike price in ISO-NE's Day-Ahead Ancillary Services
market. We first characterize the incentive of a resource to incur
preparation costs and to remain available for real-time performance.
Because this incentive depends on the counterfactual price distribution
that would arise if the resource became available, we derive a tight
conservative certificate using bounds on the resource's price impact.
The certificate is constant below the resource's marginal cost and
decreases thereafter, yielding a simple threshold structure.

We use this structure to formulate strike price selection as maximizing
the strike price while requiring sufficient certified resources to
satisfy ISO-NE's reserve requirements. We show that any finite optimum
is attained at a resource threshold, which reduces the resulting
mixed-integer nonlinear formulation to scalar threshold calculations
and a finite sequence of linear reserve-feasibility checks.

The empirical study illustrates how these thresholds vary between
representative gas-fired resources and modeling assumptions. Under the
baseline specification, the higher-heat-rate Gas CT is more likely than
the Gas CC to be never certified or to impose a finite upper bound on
the strike price. More generally, the never-certified, finite-threshold,
and always-certified regimes identify which resources are responsive to
changes in the strike price and which are likely to constrain the
system-wide decision.

A complete system-wide implementation would require a detailed
resource-level study of preparation costs, reserve capabilities,
product eligibility, and price impacts. The present analysis therefore
provides the optimization framework and the resource-level quantities
needed for such an implementation, rather than a calibrated estimate of
ISO-NE's optimal strike price. Future work may combine the framework
with detailed fleet data and market simulations to evaluate 
strike prices and market outcomes throughout the system.
\bibliographystyle{ieeetr}
\bibliography{references}


\end{document}